\def\d{{\rm d}}
\def\R {\mathbb{R}}
\def \l {\langle}
\def \r {\rangle}
\newtheorem{theorem}{Theorem}[section]
\newtheorem{lemma}[theorem]{Lemma}
\newtheorem{proposition}{Proposition}[section]
\theoremstyle{definition}
\title[Propagation of singularities for Hamilton-Jacobi equations] 
      {Global Propagation of Singularities\\for Time Dependent Hamilton-Jacobi Equations}
\author[Piermarco Cannarsa, Marco Mazzola and Carlo Sinestrari]{}
\subjclass{Primary: 58F15, 58F17; Secondary: 53C35.}
 \keywords{Dimension theory, Poincar\'e recurrences, multifractal analysis.}
 \email{email1@smsu.edu}
 \email{marco.mazzola@imj-prg.fr}
 \email{email3@ece.pdx.edu}
\thanks{The first author is supported by NSF grant xx-xxxx}
\begin{document}
\maketitle

\centerline{\scshape Piermarco Cannarsa}
\medskip
{\footnotesize
 \centerline{Dipartimento di Matematica}
   \centerline{Universit\`a di Roma ``Tor Vergata''}
   \centerline{ Via della Ricerca Scientifica 1, 00133 Roma, Italy}
} 

\medskip

\centerline{\scshape Marco Mazzola}
\medskip
{\footnotesize
 \centerline{CNRS, IMJ-PRG, UMR 7586}
 \centerline{Sorbonne Universit\'es, UPMC Univ Paris Diderot, Sorbonne Paris Cit\'e}
   \centerline{Case 247, 4 Place Jussieu, 75252 Paris, France}
}

\medskip

\centerline{\scshape Carlo Sinestrari}
\medskip
{\footnotesize
 \centerline{Dipartimento di Matematica}
   \centerline{Universit\`a di Roma ``Tor Vergata''}
   \centerline{ Via della Ricerca Scientifica 1, 00133 Roma, Italy}
   }

\bigskip

 \centerline{(Communicated by the associate editor name)}

\begin{abstract}
We investigate the properties of the set of singularities of semiconcave solutions of Hamilton-Jacobi equations of the form
\begin{equation}\label{abstract:EQ}
    u_t(t,x)+H(\nabla u(t,x))=0, \qquad\text{a.e. }(t,x)\in (0,+\infty)\times\Omega\subset\R^{n+1}\,.
\end{equation}
It is well known that the singularities of such solutions propagate locally along generalized characteristics. Special generalized characteristics, satisfying an energy condition, can be constructed, under some assumptions on the structure of the Hamiltonian $H$. In this paper, we provide estimates of the dissipative behavior of the energy along such curves. As an application, we prove that the singularities of any viscosity solution of (\ref{abstract:EQ}) cannot vanish in a finite time.
\end{abstract}

\section{Introduction}
It is commonly accepted that, in  optimal control, a crucial role is played by the Hamilton-Jacobi equation
\begin{equation}\label{intro:HJ}
\begin{cases}
\hspace{.cm}
u_t(t,x)+H\big(x,\nabla u(t,x)\big)=0 
&(t,x)\in (0,T)\times\R^n
\\
u(0,x)=u_0(x)
&x\in\R^n
\hspace{.cm}
\end{cases}
\end{equation}
where
\begin{itemize} 
\item $H:\R^n\times \R^n\to \R$ is a $C^2$ smooth function such that
 \begin{eqnarray*}
&(a)&
\lim_{|p|\rightarrow\infty}\inf_{x\in \R^n}\frac{H(x,p)}{|p|}=+\infty
\\
&(b)&
D_p^2H(x,p)
>0,\quad\forall (x,p)\in\R^n\times \R^n
\end{eqnarray*}
\item 
$ u_0:\R^n\to\R$ is a Lipschitz function.
\end{itemize}
Indeed, being able to characterize the value function as the unique solution of \eqref{intro:HJ} is the starting point towards a rigorous approach to dynamic programing. 

The notion of viscosity solutions, introduced in the seminal papers \cite{CL} and \cite{CEL}, provides the right class of generalized solutions to study existence, uniqueness, and stability issues for problem \eqref{intro:HJ}. The reader will find an overview of the main features of this theory in \cite{BCD}, for first order equations, and \cite{FS}, for second order problems.

On the other hand, it is also well known that Hamilton-Jacobi equations have no global smooth solutions, in general, because solutions may develop singularities---i.e., discontinuities of the gradient---in finite time due to crossing of characteristics. 

Indeed, the maximal regularity one may expect for solutions of \eqref{intro:HJ} is that, for any $t>0$, $u(t,\cdot)$ is locally semiconcave on $\R^n$, that is, $u(t,\cdot)$ can be represented as the sum of a concave and a smooth function on each compact subset of $\R^n$.
In fact, the notion of semiconcave solution was used in the past even to provide existence and uniqueness results for \eqref{intro:HJ}  before the theory of viscosity solution was developed, see \cite{Do}, \cite{Kruzkhov}, and \cite{Krylov}. 
Nowadays, semiconcavity is still an important property in the study of Hamilton-Jacobi when related to optimal control problems in euclidean spaces (\cite{cs}) and even on Riemannian manifolds (\cite{Villani}). However, it is rather regarded as a regularizing effect of the nonlinear semigroup associated with  \eqref{intro:HJ}---in some sense, a sign of irreversibility in Hamilton-Jacobi dynamics.

Another evidence of irreversibility for 
the equation
 \begin{equation}\label{intro:E}
u_t(t,x)+H\big(x,\nabla u(t,x)\big)=0 
\qquad(t,x)\in (0,T)\times\mathbb{R}^n
\end{equation}
is the {\em persistence of singularities}, that is, the fact that once a singularity is created, it will propagate forward in time up to $+\infty$.
Unlike the gain of semiconcavity, such a phenomenon is not well understood so far.

What is sufficiently clear to this date is the local propagating structure of the singular set of a viscosity solution $u$ of \eqref{intro:E}:
if $(t_0,x_0)\in[0,+\infty)\times\R^n$ is a singular point of $u$, then there exists a Lipschitz arc $\gamma:[t_0,t_0+\tau)\to\R^n$ such that
$(t,\gamma(t))$ is singular for all $t\in[t_0,t_0+\tau)$, see \cite{AC2}, \cite{Y}, and \cite{CY}. Therefore, the question we are now interested in is to provide conditions to ensure that $\tau=+\infty$. We note that, in general, this problem has a negative answer if $H$ is allowed Lipschitz dependence in $(t,x)$ even for $n=1$, see \cite[Example~5.6.7]{cs}.

A first, simple case where the answer to the above problem is positive is when $n=1$ and $H$ is sufficiently smooth. Indeed, the $x$-derivative of $u$ turns out to be a solution of a conservation law for which the results in \cite{D} ensure the persistence of singularities. However, the one-dimensional case is very special because topological obstructions prevent singularities from disappearing after their onset.

Another result that guarantees the global propagation of singularities in any space dimension was obtained in \cite{ac} for {\em concave}  solutions  of the Hamilton-Jacobi equation $$u_t+H(\nabla u)=0.$$ Requiring concavity---unlike semiconcavity---for $u$ is, however, a restrictive assumption because it imposes a global constraint on solutions. 

A last result, which is strongly related to the above problem, concerns the distance function, $d_\Omega$, from the boundary of a bounded open subset $\Omega$ of a Riemannian manifold. Such a function is indeed the solution of a well known stationary Hamilton-Jacobi equation, that is, the eikonal equation. In \cite{acns}, it is shown that the singular set of $d_\Omega$ is invariant under the generalized gradient flow, a property which is crucial to prove that $\Omega$ has the same homotopy type as the singular set of $d_\Omega$.

In this paper, we address the above problem for solution of the Hamilton-Jacobi equation
\begin{equation}\label{intro:EQ}
    u_t(t,x)+H(\nabla u(t,x))=0, \qquad(t,x)\in (0,+\infty)\times\Omega,
\end{equation}
where $\Omega$ is a bounded domain in $\R^n$ and $H:\R^n\to\R$ is the quadratic form $$H(p)={1\over2}Ap\cdot p,$$ with $A$ a positive definite $n\times n$ real matrix. Our main result, Theorem~\ref{th:persistence}, ensures that the singularities of any viscosity solution of \eqref{intro:EQ} persist for all times, or at least until the singular arc touches the boundary of $\Omega$. More precisely, we show that, if $(t_0,x_0)$ is a singular point of $u$, then there exist $T\in(0,+\infty]$ and a Lipschitz continuous arc $\gamma:[t_0,t_0+T)\to\R^n$, starting from $x_0$, with
$(s,\gamma(s))$ singular for all $s\in[t_0,t_0+T)$
and such that $$\lim_{s\to t_0+T}\gamma(s)\in\partial \Omega$$whenever $T<+\infty$.

The proof of the above result relies on two main ideas that are converted in two technical results, respectively. In the first one, Lemma~\ref{lm},  we obtain, as in \cite{acns}, a sharp semiconcavity estimate for a suitable transform of the solution $u$. In the second one, Theorem~\ref{th}, we establish an inequality showing that the full Hamiltonian associated with \eqref{intro:EQ}, that is,
$$F(\tau,p)=\tau+H(p),$$ 
decreases along a selection of the superdifferential of $u$, evaluated at any point of a suitable arc. 
To be more specific,  let $(t_0,x_0)\in Q$ and let $\bar t< t_0$. Then we prove that there exist $T'>0$ and a Lipschitz continuous arc $\gamma:[t_0,t_0+T')\to\Omega$, starting from $x_0$,  such that
\begin{equation}\label{eq:introdissi}
\min_{(\tau,p)\in D^+u(s,\gamma(s))}F(\tau,p)\leq\left(t_0-\bar t\over s-\bar t\right)^2\min_{(\tau_0,p_0)\in D^+u(t_0,x_0)}F(\tau_0,p_0)
\end{equation}
for every $s\in[t_0,t_0+T')$. Such a dissipative behavior is essential to deduce persistence of singularities. Indeed, the above inequality yields that, if $(t_0,x_0)$ is a singular point of $u$, hence the minimum on the right-hand side is strictly negative, then the minimum on the left side must be negative too, thus forcing  the point $(s,\gamma(s))$ to be singular as well for every $s\in[t_0,t_0+T')$. Moreover, the quantitative estimate  \eqref{eq:introdissi} allows to reproduce the above reasoning starting from $t_0+T'$ as long as the arc $\gamma$ stays away from the boundary of $\Omega$.

Although the structure of the Hamiltonian in \eqref{intro:EQ} is quite special, we believe that our approach can be used to treat more complex classes of equations. In a future work, we will show how to adapt the above ideas to treat time dependent Hamilton-Jacobi equations on Riemannian manifolds.

\section{Preliminaries}

Let $\Omega\subset\R^n$ be an open set. We consider the Hamilton-Jacobi equation
\begin{equation}\label{syst}
\left\{\begin{array}{ll}
    u_t(t,x)+H(\nabla u(t,x))=0 & \mbox{ a.e. }(t,x)\in (0,+\infty)\times\Omega=:Q\\
    u(t,x)=\varphi(t,x) & \mbox{ for }(t,x)\in \partial Q,
\end{array}\right.
\end{equation}
where $H:\R^n\to\R$ is the quadratic form $$H(p)={1\over2}Ap\cdot p,$$ with $A$ positive definite, and $\varphi:\overline Q\to\R$ is Lipschitz continuous.
Here we define $$u_t={\partial u\over \partial t}\qquad \mbox{and}\qquad\nabla u=\left({\partial u\over \partial x_1},\ldots,{\partial u\over \partial x_n}\right),$$ 
whereas $D u=(u_t,\nabla u)$ indicates the gradient of $u$ whenever it exists.

Let $L$ denote the Legendre transform of $H$, i.e. $L(q)={1\over2}A^{-1}q\cdot q$. We assume that the data $\varphi$ satisfy the following compatibility condition
\begin{equation}\label{comp}
\varphi(t,x)-\varphi(s,y) \leq (t-s) L\left(\frac{x-y}{t-s}\right),
\end{equation}
for all $(t,x), (s,y) \in \partial Q$ such that $t >s \geq 0$.

Then, see \cite{L}, problem \eqref{syst} has a unique viscosity solution $u \in \rm{Lip}(\bar Q)$ which is given by the Hopf formula
\begin{equation}\label{hopf}
u(t,x)=\min_{\begin{array}{cc}(s,y)\in\partial Q\\s<t\end{array}}\left[(t-s)L\left({x-y\over t-s}\right)+\varphi(s,y)\right].
\end{equation}
%
%
Equivalently, see \cite{cs}, $u$ given by \eqref{hopf} is the unique Lipschitz function on $\bar Q$ which satisfies \eqref{syst} almost everywhere and is locally semiconcave on $Q$, that is, for every convex compact $K\subset Q$ there is a constant $C\geq0$ such that
$$u(X+H)+u(X-H)-2u(X)\leq C|h|^2$$
for every $X,H\in\R^{n+1}$ such that $X+H,X-H\in K$.
%
%
%
%
%

Since the function $u$ given by \eqref{hopf} is in general non differentiable, we denote by $\Sigma(u)$ the set of points of non-differentiability for $u$. For $X\in Q$ we introduce the superdifferential of $u$ at $X$
$$D^+u(X)=\left\{P\in\R^{n+1}\,:\,\limsup_{Y\to X}{u(Y)-u(X)-\l P,Y-X\r\over|Y-X|}\leq0\right\}$$
and the set of the reachable gradients of $u$ at $X$
$$D^*u(X)=\left\{P\in\R^{n+1}\,:\,Q\setminus\Sigma(u)\ni X_i\to X,\ Du(X_i)\to P\right\}.$$
The directional derivative of $u$ at $X$ in the direction $V\in\R^{n+1}$ is defined by
$$\partial u(X,V)=\lim_{h\to0+}{u(X+hV)-u(X)\over h}$$
and the exposed face of $D^+u(X)$ in the direction $V$ by
$$D^+u(X,V)=\left\{P\in D^+u(X)\,:\,\l P,V\r\leq \l Q,V\r \quad\forall\,Q\in D^+u(X)\right\}.$$

Since $u$ is a locally semiconcave map, the set $D^+u(X)$ is the convex hull of $D^*u(X)$. Moreover, see \cite{cs}, if $K$ is a convex compact subset of $Q$ and $C\geq0$ is a semiconcavity constant of $u$ on $K$, the superdifferential of $u$ satisfies the following monotonicity property:
\begin{equation}\label{monsup}
\l P-Q,X-Y\r\leq C|X-Y|^2
\end{equation}
for every $P\in D^+u(X)$, $Q\in D^+u(Y)$ and $X,Y\in K$.
The directional derivative of the locally semiconcave map $u$ can be connected to the superdifferential and the reachable gradients in the following way:
\begin{equation}\label{connect}
\partial u(X,V)=\min_{P\in D^+u(X)}\l P,V\r=\min_{P\in D^*u(X)}\l P,V\r
\end{equation}
for any $X\in Q$ and $V\in\R^{n+1}$. Finally, we recall a result involving the exposed face of $u$. Its proof can be found in \cite{cs}.
\begin{proposition}\label{exposed}
Let $X\in Q$ and $P\in\R^{n+1}$. Suppose that there are sequences $\{X_i\}\subset Q\setminus\{X\}$ and $P_i\in D^+u(X_i)$ satisfying
$$X_i\to X,\qquad P_i\to P\qquad\text{and}\qquad\lim_{i\to\infty}{X_i-X\over|X_i-X|}=V\,.$$
Then $P\in D^+u(X,V)$.
\end{proposition}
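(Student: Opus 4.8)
The plan is to split the claim into two parts: first that $P\in D^+u(X)$, and then that $P$ belongs to the exposed face, i.e. that $\langle P,V\rangle\le\langle Q,V\rangle$ for every $Q\in D^+u(X)$. Throughout I would fix a convex compact neighborhood $K\subset Q$ of $X$; since $X_i\to X$, we have $X_i\in K$ for all $i$ large, and I let $C\ge0$ be a semiconcavity constant for $u$ on $K$, so that both the semiconcavity estimate and the monotonicity property \eqref{monsup} are available with this single constant.

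For the first part, I would use that $P_i\in D^+u(X_i)$ together with semiconcavity of $u$ on $K$ to get the pointwise bound
$$u(Y)-u(X_i)-\langle P_i,Y-X_i\rangle\le C|Y-X_i|^2\qquad\text{for all }Y\in K.$$
Letting $i\to\infty$ and using $X_i\to X$, $P_i\to P$, and continuity of $u$, one obtains $u(Y)-u(X)-\langle P,Y-X\rangle\le C|Y-X|^2$ for all $Y\in K$, which in particular yields $P\in D^+u(X)$.

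For the second part, I would set $t_i:=|X_i-X|>0$ and $V_i:=(X_i-X)/t_i$, so that $t_i\to0$ and $V_i\to V$ by hypothesis. Fix an arbitrary $Q\in D^+u(X)$. Applying the monotonicity property \eqref{monsup} to $P_i\in D^+u(X_i)$ and $Q\in D^+u(X)$ (both $X_i,X\in K$) gives
$$\langle P_i-Q,X_i-X\rangle\le C|X_i-X|^2,$$
and dividing by $t_i>0$ yields $\langle P_i-Q,V_i\rangle\le C\,t_i$. Passing to the limit $i\to\infty$ we get $\langle P-Q,V\rangle\le0$, that is, $\langle P,V\rangle\le\langle Q,V\rangle$. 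Since $Q\in D^+u(X)$ was arbitrary, this is precisely the assertion $P\in D^+u(X,V)$; equivalently, in view of \eqref{connect}, $\langle P,V\rangle=\partial u(X,V)$.

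I do not expect a serious obstacle here: the whole argument reduces to the semiconcavity estimate and the monotonicity inequality \eqref{monsup}, and the only point requiring a little care is that these bounds are only local, which is handled by working inside a fixed convex compact neighborhood $K$ of $X$ and observing that $X_i\in K$ eventually.
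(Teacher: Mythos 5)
The paper itself does not prove Proposition~\ref{exposed}; it simply cites \cite{cs}. Your argument is a correct, self-contained proof using exactly the two ingredients the paper has already set up, namely the pointwise semiconcavity bound and the monotonicity estimate~\eqref{monsup}. The reduction to showing $P\in D^+u(X)$ by passing to the limit in the quadratic upper bound, and then showing $\l P-Q,V\r\le 0$ for arbitrary $Q\in D^+u(X)$ by dividing~\eqref{monsup} by $|X_i-X|$ and letting $i\to\infty$, is precisely the standard route followed in \cite{cs}. The only cosmetic point is a factor of $2$: with the paper's definition of the semiconcavity constant, the pointwise estimate should read $u(Y)-u(X_i)-\l P_i,Y-X_i\r\le \tfrac{C}{2}|Y-X_i|^2$, but this has no effect on either limit argument.
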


\section{A sharp monotonicity estimate}

We first prove a technical lemma. Here, for every $X=(t,x)\in\R^{n+1}$, $0<\bar t<t$ and $R>0$, we denote by $B(X,R)$ the ball of centre $X$ and radius $R$, while by $B_{\bar t}(X,R)$ we refer to the set
$$B_{\bar t}(X,R)=((\bar t,+\infty)\times\R^n)\cap B(X,R).$$

\begin{lemma}\label{lm1}
For every $X'=(t',x')\in Q$ there exist $R>0$ and $0<\bar t<t'$ such that $\overline{B_{\bar t}(X',R)}\subset Q$ and
\begin{equation}\label{t0}
u(t,x)=\min_{y\in\Omega}\left[(t-\bar t)L\left({x-y\over t-\bar t}\right)+u(\bar t,y)\right]
\end{equation}
for each $(t,x)\in B_{\bar t}(X',R)$.
\end{lemma}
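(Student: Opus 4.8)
The plan is to exploit the Hopf formula \eqref{hopf} together with a dynamic-programming (semigroup) property of the min-convolution, and then to localize. First I would fix $X'=(t',x')\in Q$. Since $Q=(0,+\infty)\times\Omega$ is open, there is $\rho>0$ with $\overline{B(X',2\rho)}\subset Q$; in particular $t'>2\rho$, so I can choose $\bar t\in(t'-\rho,t')$ with $\bar t>0$, and set $R=\rho$, which already guarantees $\overline{B_{\bar t}(X',R)}\subset Q$ (indeed the time coordinate on this set stays in $(\bar t,t'+\rho)\subset(0,+\infty)$ and the space coordinate stays in a ball compactly contained in $\Omega$). The real content is the identity \eqref{t0}, and the claim is that it should hold for \emph{every} $\bar t\in(0,t')$ with $(\bar t,y)$ playing the role of an ``intermediate time slice'' — the restriction to $B_{\bar t}(X',R)$ only serves to keep us inside $Q$ so that $u(\bar t,y)$ is given by the same Hopf formula.

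The key step is the following semigroup identity for the Hopf–Lax formula: for $(t,x)$ with $t>\bar t>0$,
\[
u(t,x)=\min_{y\in\R^n}\left[(t-\bar t)L\left(\frac{x-y}{t-\bar t}\right)+u(\bar t,y)\right],
\]
and then to argue that, for $(t,x)\in B_{\bar t}(X',R)$ with $R$ small, the minimizing $y$ automatically lies in $\Omega$, so the minimum over $\R^n$ coincides with the minimum over $\Omega$ and with \eqref{t0}. To prove the semigroup identity I would start from \eqref{hopf}: write $u(t,x)=\min_{(s,z)\in\partial Q,\,s<t}[(t-s)L(\tfrac{x-z}{t-s})+\varphi(s,z)]$, and use the convexity of $L$ (equivalently, the subadditivity $(t-s)L(\tfrac{x-z}{t-s})\le (t-\bar t)L(\tfrac{x-y}{t-\bar t})+(\bar t-s)L(\tfrac{y-z}{\bar t-s})$ obtained by Jensen applied to the convex combination with weights $\tfrac{t-\bar t}{t-s},\tfrac{\bar t-s}{t-s}$ and interpolation point $y=\tfrac{(t-\bar t)z+(\bar t-s)x}{t-s}$). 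One inequality ($\le$) follows by plugging any $y$ and any optimal $(s,z)$ for the slice at $(\bar t,y)$; the other ($\ge$) follows by taking an optimal $(s,z)$ for $u(t,x)$, defining $y$ as the above interpolation point, noting $(\bar t,y)\in Q$ (here $\bar t>s\ge0$ and $y\in\Omega$ provided we also know $u(\bar t,y)$ admits the representation, i.e. provided $\bar t$ is small enough that the segment stays feasible — this is where I would need a short argument, using that $(s,z)\in\partial Q$ and that characteristics emanating from $\partial Q$ foliate a neighborhood), and checking $u(\bar t,y)\le (\bar t-s)L(\tfrac{y-z}{\bar t-s})+\varphi(s,z)$ by \eqref{hopf} applied at $(\bar t,y)$.

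For the localization I would use that the minimizer $y^\ast=y^\ast(t,x)$ in the semigroup formula satisfies $y^\ast\to x'$ as $(t,x)\to X'$: indeed optimality gives $(t-\bar t)L(\tfrac{x-y^\ast}{t-\bar t})+u(\bar t,y^\ast)=u(t,x)$, and since $L(q)\ge c|q|^2$ for some $c>0$ (as $A^{-1}>0$) while $u$ is Lipschitz, one gets $|x-y^\ast|\le C(t-\bar t)^{1/2}$; combined with $t-\bar t<2R$ and $x\in B(x',R)$, choosing $R$ small forces $y^\ast\in\Omega$ (in fact in a small ball around $x'$ compactly contained in $\Omega$). Hence the unconstrained and $\Omega$-constrained minima agree on $B_{\bar t}(X',R)$, and \eqref{t0} follows.

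The main obstacle I anticipate is the rigorous justification that the interpolated point $(\bar t,y)$ is ``admissible'' in the right sense — i.e., that $u(\bar t,y)$ is itself representable via the boundary Hopf formula so that the subadditivity chain closes — since $Q$ is only a domain (not all of $\R^n$) and $\bar t$ is a fixed interior time, not the initial time $0$. Two ways around it: either restrict from the start to $(t,x)\in B_{\bar t}(X',R)$ and show the optimal $z\in\partial Q$ lies in the ``past light cone'' region where the segment $[(\bar t,y),(t,x)]$ stays in $Q$; or invoke the well-known consistency of the Hopf–Lax semigroup for the equation $u_t+H(\nabla u)=0$ on any cylinder (cf. \cite{L}, \cite{cs}) directly, treating $u(\bar t,\cdot)$ as new initial data on the compatible data set. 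I would take the second route for brevity, citing \cite{cs}, and spend the written proof mainly on the localization estimate $|x-y^\ast|=O((t-\bar t)^{1/2})$, which is the quantitative heart of the lemma.
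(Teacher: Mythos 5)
Your plan — localize, then use a Hopf–Lax "restart at time $\bar t$" identity plus a coercivity estimate to force the minimizer into $\Omega$ — is in the same spirit as the paper, and your localization sketch (quadratic growth of $L$ versus the Lipschitz bound on $u$) matches what the paper does. But there is a genuine gap at the step you yourself flag as the "main obstacle," and route (b) does not actually close it.

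The issue is the $\geq$ direction of your semigroup identity. Your interpolation argument requires (i) that the optimal $(s,z)\in\partial Q$ in the Hopf formula for $u(t,x)$ has $s<\bar t$, and (ii) that the interpolated point $y$ at time $\bar t$ lies in $\Omega$ so that the Hopf formula at $(\bar t,y)$ may be applied. Neither is automatic on a bounded $\Omega$: the optimal $(s,z)$ could sit on the lateral boundary with $s\geq\bar t$, and even with $s<\bar t$ a non-convex $\Omega$ can cause the segment to exit. Your proposed fix — "invoke the consistency of the Hopf–Lax semigroup treating $u(\bar t,\cdot)$ as new initial data on the compatible data set" — begs the question: what must be checked is precisely that $u$ restricted to the whole boundary $\partial Q_{\bar t}$ of the truncated cylinder satisfies the compatibility condition \eqref{comp}. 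This is where the paper spends its effort. In the delicate case $y\in\Omega$ with $s=\bar t$, the paper picks an optimal $(r,z)\in\partial Q$ for $u(\bar t,y)$ and closes the inequality $u(t,x)-u(\bar t,y)\leq(t-\bar t)L\left(\frac{x-y}{t-\bar t}\right)$ by an explicit expansion of the quadratic form — a computation that genuinely uses the special structure of $L$ and is not a citable "well-known" fact. Once compatibility is verified, the paper does not try to prove the semigroup identity by interpolation at all; it simply invokes uniqueness of viscosity solutions on $Q_{\bar t}$ (from \cite{L}) to obtain the Hopf representation over all of $\partial Q_{\bar t}$, and only afterward localizes to discard the lateral boundary. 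This order of operations sidesteps exactly the admissibility problem you run into. Your localization should also be split in two: one estimate ruling out distant $y$ on the slice $\{\bar t\}\times\Omega$, and a separate estimate ruling out points on the lateral boundary $(\bar t,t)\times\partial\Omega$; your $|x-y^\ast|=O((t-\bar t)^{1/2})$ bound addresses the first but not the second.
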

\begin{proof} Let $\bar t>0$ and set $Q_{\bar t}=(\bar t,+\infty)\times\Omega$. First we observe that
\begin{equation}\label{hopft0}
u(t,x)=\min_{\begin{array}{cc}(s,y)\in\partial Q_{\bar t}\\s<t\end{array}}\left[(t-s)L\left({x-y\over t-s}\right)+u(s,y)\right]
\end{equation}
for every $(t,x)\in Q_{\bar t}$. Indeed, let $(t,x),(s,y)\in\partial Q_{\bar t}$ such that $t>s\geq\bar t$. Since $t>\bar t$ and $(t,x)\in\partial Q_{\bar t}$, we have that $x\in\partial\Omega$. If $y\in\partial\Omega$, by (\ref{comp}) we obtain
$$u(t,x)-u(s,y)=\varphi(t,x)-\varphi(s,y) \leq (t-s) L\left(\frac{x-y}{t-s}\right)\,.$$
Otherwise, let $y\in\Omega$, so that $s=\bar t$, and let $(r,z)\in\partial Q$ such that $r<s$ and
\begin{equation}\label{joife}
u(s,y)=(s-r) L\left(\frac{y-z}{s-r}\right)+\varphi(r,z)\,.
\end{equation}
Then, by (\ref{joife}), (\ref{comp}) and the structure of $L$, we have
\begin{equation*}
\begin{split}
&u(t,x)-u(s,y)\\
=&\,\varphi(t,x)-(s-r) L\left(\frac{y-z}{s-r}\right)-\varphi(r,z)\\
\leq&\,(t-r) L\left(\frac{x-z}{t-r}\right)-(s-r) L\left(\frac{y-z}{s-r}\right)\\
=&\,(t-r) L\left(\frac{x-y}{t-r}\right)+{1\over t-r}A^{-1}(x-y)\cdot(y-z)+(t-r) L\left(\frac{y-z}{t-r}\right)-(s-r) L\left(\frac{y-z}{s-r}\right)\\
=&\,(t-s) L\left(\frac{x-y}{t-s}\right)-{1\over 2(t-r)}\left({s-r\over t-s}\right)A^{-1}(x-y)\cdot(x-y)\\
\ &\qquad+{1\over t-r}A^{-1}(x-y)\cdot(y-z)-{1\over 2(t-r)}\left({t-s\over s-r}\right)A^{-1}(y-z)\cdot(y-z)\\
=&\,(t-s) L\left(\frac{x-y}{t-s}\right)-{1\over t-r}L\left(\sqrt{s-r\over t-s}(x-y)-\sqrt{t-s\over s-r}(y-z)\right)\\
\leq&\,(t-s) L\left(\frac{x-y}{t-s}\right)\,.
\end{split}
\end{equation*}
In both cases, we obtain the compatibility condition
$$u(t,x)-u(s,y) \leq (t-s) L\left(\frac{x-y}{t-s}\right)\,.$$
Then, see \cite{L}, the right hand side of (\ref{hopft0}) is the unique viscosity solution of
\begin{equation*}
\left\{\begin{array}{ll}
    v_t(t,x)+H(\nabla v(t,x))=0 & \mbox{ a.e. }(t,x)\in Q_{\bar t}\\
    v(t,x)=u(t,x) & \mbox{ for }(t,x)\in \partial Q_{\bar t}\,.
\end{array}\right.
\end{equation*}
Hence it coincides on $Q_{\bar t}$ with the map $u$.

Now fix $X'=(t',x')\in Q$ and let $R'>0$ be such that $B(X',R')\subset Q$. For any $T>0$ the map $u$ is Lipschitz continuous on $[0,T]\times\overline{\Omega}$, see for example \cite{cs}. Let $l>0$ be a Lipschitz constant for $u$ on the set $[0,t'+R']\times\overline{\Omega}$. Let
\begin{equation}\label{lamb}
\lambda=\min\{A^{-1}z\cdot z\,:\,z\in\partial B(0,1)\}>0
\end{equation}
and let $K>R'$ be such that
\begin{equation}\label{defK}
{\lambda(K-R')^2\over l(1+K+R')}\geq1\,.
\end{equation}
If $\max\{0,t'-{1\over2}\}<\bar t<t'$, then for every $(t,x)\in B_{\bar t}(X',R')$ and $(s,y)\in\partial Q_{\bar t}$ such that $\bar t<s<t$ and $|x'-y|\geq K$, by (\ref{lamb}) we obtain
\begin{equation*}
(t-s)L\left({x-y\over t-s}\right)\geq{\lambda|x-y|^2\over2(t-s)}>\lambda(K-R')^2\,.
\end{equation*}
On the other hand, by (\ref{defK}) we have
$$|u(t,x)-u(s,y)|<l(1+K+R')\leq\lambda(K-R')^2\,.$$
Therefore, if $\max\{0,t'-{1\over2}\}<\bar t<t'$ and $(t,x)\in B_{\bar t}(X',R')$, the minimum in (\ref{hopft0}) is realized at some $(s,y)\in B(X',K)\cap \overline{Q}$, such that $(s,y)\in\partial Q_{\bar t}$ and $s<t$. Let
$$M=\sup\left\{\,|u(X)|\,:\,X\in B(X',K)\cap Q\right\}\,.$$
If $0<R<R'$ and $\max\{0,t'-{1\over2}\}<\bar t<t'$ satisfy
$${(d_{\partial\Omega}(x')-R)^2\over t'-\bar t+R}>{4M\over\lambda},$$
then for every $(t,x)\in B_{\bar t}(X',R)$, $\bar t<s<t$ and $y\in\partial\Omega$ we have
\begin{equation*}
\begin{split}
(t-s)\,L\left({x-y\over t-s}\right)+u(s,y)=&{1\over 2(t-s)}A^{-1}(x-y)\cdot(x-y)+u(s,y)\\
\geq&{1\over 2(t-s)}\lambda(d_{\partial\Omega}(x')-R)^2+u(s,y)>M\,.
\end{split}
\end{equation*}
We just proved that if $(t,x)\in B_{\bar t}(X',R)$, the minimum in (\ref{hopft0}) is realized at some point of the form $(\bar t,y)$, $y\in\Omega$. Therefore we obtain (\ref{t0}). 
\end{proof}

\bigskip
In \cite{acns}, the invariance under the generalized gradient flow of the singular set of a solution $u:\R^m\to\R$ of the eikonal equation is proved. The argument of the proof relies on the monotonicity estimate
$$\l u(X)P-u(Y)Q,X-Y\rangle\leq |X-Y|^2$$
for every $P\in D^+u(X)$, $Q\in D^+u(Y)$ and $X,Y\in\R^m$. This property is a direct consequence of (\ref{monsup}) and the global semiconcavity of the square of $u$ with constant $C=2$. Consider now the viscosity solution $u$ of (\ref{syst}). Lemma \ref{lm1} can be exploited in order to obtain some semiconcavity estimates for $u$. For example, fixed $X'=(t',x')\in Q$, let $R>0$ and $0<\bar t<t'$ be associated to $X'$ as in Lemma \ref{lm1}. Using (\ref{t0}), it is possible to verify that for every $x,h\in\R^n$ such that $x-h,x+h\in B(x',R)$,
$$u(t',x+h)-u(t',x-h)-2u(t',x)\leq{\Lambda\over t'-\bar t}|h|^2\,,$$
where $\Lambda=\max\{A^{-1}z\cdot z\,:\,z\in\partial B(0,1)\}$. This semiconcavity property and (\ref{monsup}) imply
$$\l p-q,x-y\r\leq {\Lambda|x-y|^2\over t'-\bar t}$$
for every $p\in \nabla^+u(t',x)$, $q\in \nabla^+u(t',y)$ and $x,y\in B(x',R)$.
Yet, in order to study the propagation in time of the singularities of $u$, we need an estimate on the monotonicity of the superdifferential jointly in time and space. For this reason, as in \cite{acns}, a suitable transform of the solution $u$ is introduced: for any $\bar t>0$, the map $v_{\bar t}:(\bar t,+\infty)\times\Omega\to\R$ is defined by
\begin{equation}\label{defv}
v_{\bar t}(t,x)=(t-\bar t)\,u(t,x)\,.
\end{equation}
In a similar way as before, given $X'=(t',x')\in Q$, for suitable $0<\bar t<t'$ and $R>0$ some semiconcavity properties of $v_{\bar t}$ on $B_{\bar t}(X',R)$ can be obtained, jointly in time and space. In the following lemma, we derive the resulting sharp monotonicity estimate for the superdifferential of $v_{\bar t}$.

\begin{lemma}\label{lm}
Let $X'=(t',x')\in Q$ and let $R>0$ and $0<\bar t<t'$ be associated to $X'$ as in Lemma \ref{lm1}. Then
$$\left\l P_1-P_2,X_1-X_2\right\r\leq2L(x_1-x_2)$$
for every $X_1,X_2\in B_{\bar t}(X',R)$ and every $P_i\in D^+v_{\bar t}(X_i)$, $i=1,2$.
\end{lemma}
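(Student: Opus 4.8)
The plan is to derive the claimed estimate for $v_{\bar t}$ from the Hopf-type representation \eqref{t0} together with the monotonicity property \eqref{monsup} of the superdifferential of a semiconcave function. The key observation is that the transform $v_{\bar t}(t,x) = (t-\bar t)u(t,x)$, when rewritten using \eqref{t0}, becomes an infimum over $y\in\Omega$ of the function
$$
(t,x) \mapsto \frac{1}{2}A^{-1}(x-y)\cdot(x-y) + (t-\bar t)\,u(\bar t,y),
$$
because $(t-\bar t)L\!\left(\tfrac{x-y}{t-\bar t}\right) = \tfrac{1}{2(t-\bar t)}A^{-1}(x-y)\cdot(x-y)\cdot(t-\bar t) = \tfrac12 A^{-1}(x-y)\cdot(x-y)$. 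This is the point of the transform: the singular denominator $t-\bar t$ cancels, and we are left with an infimum of functions that are \emph{jointly} affine in $t$ and quadratic (with fixed Hessian $A^{-1}$) in $x$. So $v_{\bar t}$ is, on $B_{\bar t}(X',R)$, an infimum of a family of functions each of which equals a fixed quadratic form $\tfrac12 A^{-1}x\cdot x$ plus an affine function of $(t,x)$.

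First I would make this representation precise on the ball: by Lemma~\ref{lm1}, for $(t,x)\in B_{\bar t}(X',R)$ we have $v_{\bar t}(t,x) = \min_{y\in\Omega}\big[\tfrac12 A^{-1}(x-y)\cdot(x-y) + (t-\bar t)u(\bar t,y)\big]$. Then I would observe that the function $g_y(t,x) := \tfrac12 A^{-1}(x-y)\cdot(x-y) + (t-\bar t)u(\bar t,y)$ satisfies, for all $X_1=(t_1,x_1)$, $X_2=(t_2,x_2)$,
$$
g_y(X_1)+g_y(X_2)-2g_y\!\left(\tfrac{X_1+X_2}{2}\right) = \tfrac14 A^{-1}(x_1-x_2)\cdot(x_1-x_2) = \tfrac12 L(x_1-x_2),
$$
i.e. $v_{\bar t}$ is semiconcave on $B_{\bar t}(X',R)$ with the sharp "modulus" $\tfrac12 L(x_1-x_2)$ — more precisely, $g_y(X_1)+g_y(X_2)-2g_y(\tfrac{X_1+X_2}{2})$ is independent of $y$, which is exactly what lets the infimum inherit the estimate. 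Taking the infimum over $y$ and using that the value at the midpoint is bounded below by $\min_y g_y$ while the values at $X_1,X_2$ are the actual values $v_{\bar t}(X_i)$, one gets
$$
v_{\bar t}(X_1)+v_{\bar t}(X_2)-2v_{\bar t}\!\left(\tfrac{X_1+X_2}{2}\right)\le \tfrac12 L(x_1-x_2).
$$

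Next I would upgrade this midpoint-semiconcavity bound to the superdifferential monotonicity \eqref{monsup} in its sharp form. The general principle (Cannarsa–Sinestrari) is that if $w(X_1)+w(X_2)-2w(\tfrac{X_1+X_2}{2})\le \omega(X_1-X_2)$ for a quadratic form $\omega$, then $\langle P_1-P_2, X_1-X_2\rangle \le 2\,\omega(X_1-X_2)$ for $P_i\in D^+w(X_i)$; applying this with $\omega(X_1-X_2) = \tfrac12 L(x_1-x_2)$ gives exactly $\langle P_1-P_2, X_1-X_2\rangle \le 2L(x_1-x_2)$. Alternatively, and perhaps more cleanly in this setting, one can argue directly: write $v_{\bar t} = \phi + (\text{affine in }t)$-type decomposition, noting that $v_{\bar t}(t,x) - \tfrac12 A^{-1}x\cdot x$ is concave in $(t,x)$ jointly (being an infimum of functions affine in $(t,x)$, since $\tfrac12 A^{-1}(x-y)\cdot(x-y) - \tfrac12 A^{-1}x\cdot x = -A^{-1}x\cdot y + \tfrac12 A^{-1}y\cdot y$ is affine in $x$), so that for $P_i = (\tau_i,p_i)\in D^+v_{\bar t}(X_i)$ the shifted vectors $(\tau_i, p_i - A^{-1}x_i)$ lie in the superdifferential of a concave function and hence satisfy the monotonicity inequality with constant zero: $\langle (\tau_1-\tau_2, (p_1-A^{-1}x_1)-(p_2-A^{-1}x_2)), X_1-X_2\rangle \le 0$. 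Expanding this and using $\langle A^{-1}x_1 - A^{-1}x_2, x_1-x_2\rangle = A^{-1}(x_1-x_2)\cdot(x_1-x_2) = 2L(x_1-x_2)$ yields the claim.

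\textbf{The main obstacle} I anticipate is the passage from the global representation \eqref{t0} (which holds on the ball $B_{\bar t}(X',R)$) to statements about $D^+v_{\bar t}$ at points of the ball: one must ensure that the superdifferential computed intrinsically on $Q$ agrees with the one relevant to the local representation, and that midpoints $\tfrac{X_1+X_2}{2}$ of pairs in $B_{\bar t}(X',R)$ stay in a region where \eqref{t0} applies (convexity of the ball handles this). A secondary subtlety is justifying the "infimum of affine functions is concave, hence its superdifferential is monotone with zero constant" step at the level of $D^+$ rather than classical gradients — but this is standard for concave functions (the superdifferential in the viscosity sense coincides with the superdifferential in the convex-analytic sense, which is monotone). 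I expect the write-up to consist mainly of: (i) invoking Lemma~\ref{lm1} to get \eqref{t0} on the ball; (ii) the elementary identity showing the second difference of $g_y$ is $y$-independent and equals $\tfrac12 L(x_1-x_2)$; (iii) the concavity-of-$v_{\bar t} - \tfrac12 A^{-1}x\cdot x$ observation; and (iv) the short computation expanding the zero-constant monotonicity inequality to recover $2L(x_1-x_2)$.
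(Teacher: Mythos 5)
Your proof is correct, and it takes a genuinely cleaner route than the paper's. You observe that after the change of variable $v_{\bar t}(t,x)=(t-\bar t)u(t,x)$, the representation from Lemma~\ref{lm1} collapses to $v_{\bar t}(t,x)=\min_{y\in\Omega}\bigl[L(x-y)+(t-\bar t)u(\bar t,y)\bigr]$, and since $L(x-y)-L(x)$ is affine in $x$, the function $w(t,x):=v_{\bar t}(t,x)-L(x)$ is a minimum over $y$ of functions affine in $(t,x)$, hence concave on the convex set $B_{\bar t}(X',R)$; the zero-constant monotonicity of $D^+w$ plus the smooth-shift rule $D^+v_{\bar t}(t,x)=D^+w(t,x)+\{(0,A^{-1}x)\}$ then gives the estimate directly, using $\langle A^{-1}(x_1-x_2),x_1-x_2\rangle=2L(x_1-x_2)$. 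The paper instead works at the level of reachable gradients: it writes $P_i\in D^+v_{\bar t}(X_i)$ as a convex combination $\sum_k\lambda_i^k\bigl(u(\bar t,y_i^k),\nabla L(x_i-y_i^k)\bigr)$, uses the optimality of each $y_i^k$ in the representation formula (inequalities \eqref{inneq1}--\eqref{inneq2}) to control the $t$-component, and then does an explicit algebraic computation with the quadratic form ($\eqref{L-L1}$--$\eqref{L-L2}$). The two proofs rest on exactly the same structure — the $y$-optimality conditions from \eqref{t0} are precisely what make $w$ concave — but your packaging of the key fact as ``$v_{\bar t}-L$ is concave'' avoids both the convex-combination bookkeeping and the quadratic-form expansion, and makes the sharpness (constant exactly $2L$, no dependence on $\bar t$ or $R$) transparent. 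The one thing worth stating explicitly in a write-up is the sum rule for the superdifferential of a semiconcave function plus a $C^1$ function, and that the local superdifferential of a concave function on an open convex set agrees with the convex-analytic one; both are standard (Cannarsa--Sinestrari, Chapter 3) and you correctly flag them.
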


\begin{proof} Fix $X'=(t',x')\in Q$ and let $R$ and $\bar t$ be such that (\ref{t0}) holds true for all $(t,x)\in B_{\bar t}(X',R)$. Let $(t,x)\in B_{\bar t}(X',R)\setminus\Sigma(u)$. By (\ref{t0}) there exists $y\in\Omega$ such that
$$u(t,x)=(t-\bar t)L\left({x-y\over t-\bar t}\right)+u(\bar t,y)\,.$$
Then
\begin{equation}\label{tibch}
v_{\bar t}(t,x)=L\left(x-y\right)+(t-\bar t)u(\bar t,y)
\end{equation}
and it is easy to prove that $Dv_{\bar t}(t,x)$ coincides with the gradient of the right hand side of (\ref{tibch}) at $(t,x)$, that is
\begin{equation}\label{fkbvh}
Dv_{\bar t}(t,x)=\left(u(\bar t,y),\nabla L(x-y)\right)\,.
\end{equation}
In general, when $(t,x)\in B_{\bar t}(X',R)$, any element of the superdifferential $D^+v_{\bar t}(t,x)$ is the convex combination of elements of the form (\ref{fkbvh}), since $v_{\bar t}$ is locally semiconcave. Hence, given $X_i=(t_i,x_i)\in B_{\bar t}(X',R)$ and $P_i\in D^+v_{\bar t}(X_i)$, $i=1,2$, there exist $\lambda_i^k\geq0$ and $y_i^k\in\Omega$ for $i=1,2$ and $k\in\{0,\ldots,n+1\}$, such that $\sum_{k=0}^{n+1}\lambda_i^k=1$,
\begin{equation}\label{frekip}
v_{\bar t}(X_i)=L(x_i-y_i^k)+(t_i-\bar t)\,u(\bar t,y_i^k)
\end{equation}
and
\begin{equation}\label{feajio}
P_i=\sum_{k=0}^{n+1}\lambda_i^k\,(u(\bar t,y_i^k),\nabla L(x_i-y_i^k))\,.
\end{equation}
By (\ref{t0}), (\ref{defv}) and (\ref{frekip}), for every $k_1,k_2\in\{0,\ldots,n+1\}$ we obtain
\begin{equation}\label{inneq1}
L(x_1-y_1^{k_1})+(t_1-\bar t)\,u(\bar t,y_1^{k_1})\leq L(x_1-y_2^{k_2})+(t_1-\bar t)\,u(\bar t,y_2^{k_2})
\end{equation}
and
\begin{equation}\label{inneq2}
L(x_2-y_2^{k_2})+(t_2-\bar t)\,u(\bar t,y_2^{k_2})\leq L(x_2-y_1^{k_1})+(t_2-\bar t)\,u(\bar t,y_1^{k_1})\,.
\end{equation}
Thus, (\ref{feajio}), (\ref{inneq1}) and (\ref{inneq2}) imply
\begin{equation}\label{biggg}
\begin{split}
&\left\l P_1-P_2,X_1-X_2\right\r\\
=&\sum_{k_1,k_2=0}^{n+1}\lambda_1^{k_1}\lambda_2^{k_2}\left[(u(\bar t,y_1^{k_1})-u(\bar t,y_2^{k_2}))(t_1-t_2)+\l\nabla L(x_1-y_1^{k_1})-\nabla L(x_2-y_2^{k_2}),x_1-x_2\r\right]\\
=&\sum_{k_1,k_2=0}^{n+1}\lambda_1^{k_1}\lambda_2^{k_2}\left[(t_1-\bar t)(u(\bar t,y_1^{k_1})-u(\bar t,y_2^{k_2}))+(t_2-\bar t)(u(\bar t,y_2^{k_2})-u(\bar t,y_1^{k_1}))\right.\\
\ &\qquad\left.+\,\l\nabla L(x_1-y_1^{k_1})-\nabla L(x_2-y_2^{k_2}),x_1-x_2\r\right]\\
\leq&\sum_{k_1,k_2=0}^{n+1}\lambda_1^{k_1}\lambda_2^{k_2}\left[L(x_1-y_2^{k_2})-L(x_1-y_1^{k_1})+L(x_2-y_1^{k_1})-L(x_2-y_2^{k_2})\right.\\
\ &\qquad\left.+\,\l\nabla L(x_1-y_1^{k_1})-\nabla L(x_2-y_2^{k_2}),x_1-x_2\r\right]\,.
\end{split}
\end{equation}
Observe that the special structure of $L$ yield
\begin{equation}\label{L-L1}
\begin{split}
L(x_1-y_2^{k_2})-L(x_2-y_2^{k_2})&={1\over2}A^{-1}(x_1-x_2)\cdot(x_1-x_2)+A^{-1}(x_1-x_2)\cdot(x_2-y_2^{k_2})\\
&=L(x_1-x_2)+\l\nabla L(x_2-y_2^{k_2}),x_1-x_2\r\,.
\end{split}
\end{equation}
Analogously,
\begin{equation}\label{L-L2}
L(x_2-y_1^{k_1})-L(x_1-y_1^{k_1})=L(x_1-x_2)-\l\nabla L(x_1-y_1^{k_1}),x_1-x_2\r\,.
\end{equation}
Then, combining (\ref{biggg}), (\ref{L-L1}) and (\ref{L-L2}), we obtain
$$\left\l P_1-P_2,X_1-X_2\right\r\leq2L(x_1-x_2)$$
concluding the proof. 
\end{proof}

\section{Propagation of singularities}

In what follows, we denote by $F$ the full Hamiltonian associated with (\ref{syst}), that is, for every $(\tau,p)\in\R\times\R^n$ we set 
$$F(\tau,p)=\tau+H(p).$$

Let $u$ be given by the Hopf formula (\ref{hopf}). Then, see \cite{cs}, $u$ satisfies (\ref{intro:E}) at any point $(t,x)\in Q\setminus\Sigma(u)$. Consequently, for every $X\in Q$ and any $(\tau,p)\in D^*u(X)$, we have $F(\tau,p)=0$. Since $D^+u(X)$ is the convex hull of $D^*u(X)$, the special structure of $F$ implies that $X\in Q$ is a singular point of $u$ if and only if
\begin{equation}\label{minen}
\min_{(\tau,p)\in D^+u(X)}F(\tau,p)
\end{equation}
is strictly negative.
Sufficient conditions are provided in the literature, see for example \cite{CY} and \cite{s}, for the existence of generalized characteristics, whose dynamics are determined by selections of the superdifferentials of $u$ that are "energy minimizing" in the sense of (\ref{minen}). To be more precise, in \cite{CY} it was proved that if for any $X_0=(t_0,x_0)\in Q$ there is a unique generalized characteristic starting from $X_0$, then any generalized characteristic $\xi:[t_0,t_0+T_0)\to Q$ admits right derivative ${\d\over\d s^+}\xi(s)$ for all $s\in[t_0,t_0+T_0)$, this is right-continuous and is given by
\begin{equation*}
{\d\over\d s^+}\xi(s)=DF(\tau(s),p(s))\,,
\end{equation*}
where $(\tau(s),p(s))\in D^+u(\xi(s))$ is such that
\begin{equation}\label{minF}
F(\tau(s),p(s))=\min_{(\tau,p)\in D^+u(\xi(s))}F(\tau,p)\,.
\end{equation}
When $H$ is a quadratic form, the uniqueness of the generalized characteristics, given the initial data, is a consequence of Gronwall's Lemma. Then, we can state the following
\begin{proposition}\label{prop}
Let $X'\in Q$ and $R>0$ be such that $\overline {B(X',R)}\subset Q$. Then, there exists $T_R>0$ such that for every $(t_0,x_0)\in B(X',R)$ there is a Lipschitz continuous arc $\xi:[t_0,t_0+T_R)\to Q$ satisfying the following properties:
\begin{enumerate}
\item[(i)] $\xi(t_0)=(t_0,x_0)$;
\item[(ii)] the right derivative ${\d\over\d s^+}\xi(s)$ does exist for all $s\in[t_0,t_0+T_R)$;
\item[(iii)] ${\d\over\d s^+}\xi(\cdot)$ is right-continuous and
\begin{equation}\label{hgrifj}
{\d\over\d s^+}\xi(s)=DF(\tau(s),p(s))=\left(\begin{array}{cc}1\\\nabla H(p(s))\end{array}\right),
\end{equation}
where $(\tau(s),p(s))\in D^+u(\xi(s))$ satisfies (\ref{minF}).
\end{enumerate}
Moreover,
\begin{equation}\label{dirder}
{\d\over\d s^+}u(\xi(s))=\tau(s)+A\,p(s)\cdot p(s)\qquad\forall\,s\in[t_0,t_0+T_R)\,.
\end{equation}
\end{proposition}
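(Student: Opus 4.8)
The plan is to deduce existence together with properties (i)--(iii) from the theory of generalized characteristics developed in \cite{CY}, which applies as soon as the characteristic issued from each point is unique; the uniform time $T_R$ and the identity \eqref{dirder} are then added by elementary arguments.

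\textbf{Uniqueness.} Since $DF(\tau,p)=(1,\nabla H(p))$ has first component $1$, every generalized characteristic has the form $\xi(s)=(s,x(s))$ with $\dot x(s)\in A\,\nabla^+u(s,x(s))$ for a.e.\ $s$, where $\nabla^+u(s,\cdot)$ is the spatial superdifferential of $u(s,\cdot)$ and one uses that the $p$-component of any element of $D^+u(s,x)$ lies in $\nabla^+u(s,x)$. Let $x_1,x_2$ be two such arcs issued from the same point, lying on some interval $[t_0,t_0+\delta)$ in a compact convex $K\subset Q$, and let $C$ be a semiconcavity constant of $u$ on $K$, so that each $u(s,\cdot)$ is semiconcave with constant $C$. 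I would pass to the variables $z_i(s)=A^{-1/2}x_i(s)$ and set $w(s,z)=u(s,A^{1/2}z)$; then $w(s,\cdot)$ is semiconcave with constant $C\|A\|$, and
$$\dot z_i(s)=A^{1/2}p_i(s)\in A^{1/2}\nabla^+u\big(s,A^{1/2}z_i(s)\big)=\nabla^+w\big(s,z_i(s)\big)\,.$$
The monotonicity of the superdifferential of the semiconcave map $w(s,\cdot)$, as in \eqref{monsup}, then gives $\langle \dot z_1(s)-\dot z_2(s),z_1(s)-z_2(s)\rangle\le C\|A\|\,|z_1(s)-z_2(s)|^2$ for a.e.\ $s$, whence, since $z_1(t_0)=z_2(t_0)$, Gronwall's inequality forces $z_1\equiv z_2$ and hence $x_1\equiv x_2$; a standard continuation argument extends this to the whole common domain. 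This is the only delicate point, and it is where the quadratic form of $H$ is genuinely used: the substitution $x=A^{1/2}z$ converts the drift $A\,\nabla^+u$ into a one-sided Lipschitz field.

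\textbf{Existence, (i)--(iii) and the uniform time.} By \cite{CY} (see also \cite{s}) at least one generalized characteristic issues from every $(t_0,x_0)\in Q$; by uniqueness it is the only one, so \cite{CY} yields the right differentiability (ii), the right-continuity of ${\d\over\d s^+}\xi$ and formula \eqref{hgrifj} in (iii) with $(\tau(s),p(s))$ satisfying \eqref{minF}, while (i) is the prescribed datum. For the uniform time, set $r_0=\mathrm{dist}\big(\overline{B(X',R)},\partial Q\big)>0$, let $\ell$ be a Lipschitz constant of $u$ on the compact $r_0/2$-neighbourhood $N$ of $\overline{B(X',R)}$, and put $M=\|A\|\,\ell$. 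While $\xi(s)\in N$, \eqref{hgrifj} gives $|{\d\over\d s^+}\xi(s)|\le\sqrt{1+M^2}$; hence the characteristic issued from any $(t_0,x_0)\in B(X',R)$ remains in $N\subset Q$, and is therefore defined, for all $s\in[t_0,t_0+T_R)$ with $T_R:=r_0/(2\sqrt{1+M^2})$, a bound uniform in $(t_0,x_0)$.

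\textbf{The identity \eqref{dirder}.} Fix $s\in[t_0,t_0+T_R)$ and put $V={\d\over\d s^+}\xi(s)=(1,\nabla H(p(s)))=(1,A\,p(s))$. Since $\xi$ has right derivative $V$ at $s$ and $u$ is Lipschitz, $u(\xi(s+h))=u(\xi(s)+hV)+o(h)$ as $h\to0^+$, so ${\d\over\d s^+}u(\xi(s))=\partial u(\xi(s),V)$ (the directional derivative existing by \eqref{connect}). By \eqref{connect} again, $\partial u(\xi(s),V)=\min\{\tau+A\,p\cdot p(s):(\tau,p)\in D^+u(\xi(s))\}$. Finally, since $(\tau(s),p(s))$ minimizes the $C^1$ convex function $F$ over the convex set $D^+u(\xi(s))$ and $DF(\tau(s),p(s))=(1,A\,p(s))=V$, the first-order optimality inequality $\langle V,(\tau,p)-(\tau(s),p(s))\rangle\ge0$ for every $(\tau,p)\in D^+u(\xi(s))$ shows that this minimum is attained at $(\tau(s),p(s))$ and equals $\tau(s)+A\,p(s)\cdot p(s)$, which is \eqref{dirder}.
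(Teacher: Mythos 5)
Your proposal is correct, and it departs from the paper's own proof in one interesting place. For items (i)--(iii) both you and the paper simply invoke Theorem 3.2 and Corollary 3.4 of \cite{CY} combined with Gronwall; the paper does not spell out the uniqueness argument or the origin of the uniform time $T_R$, whereas you supply both (the $z=A^{-1/2}x$ substitution converting $\dot x\in A\nabla^+u$ into a one-sided Lipschitz field, and the explicit bound $T_R=r_0/(2\sqrt{1+M^2})$), which is a useful elaboration rather than a different route.

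The genuine divergence is in the proof of \eqref{dirder}. Both arguments reduce to showing that the energy-minimizing selection $(\tau(s),p(s))$ also minimizes the linear functional $\l\cdot,V\r$ over $D^+u(\xi(s))$, where $V=(1,\nabla H(p(s)))$, after which \eqref{connect} finishes. The paper obtains this by applying Proposition~\ref{exposed}: it uses the right-continuity of $\xi$ and of $(\tau(\cdot),p(\cdot))$ guaranteed by (iii) to approximate $(\tau(s),p(s))$ along the characteristic and conclude $(\tau(s),p(s))\in D^+u\bigl(\xi(s),V/|V|\bigr)$. You instead note that $(\tau(s),p(s))$ minimizes the differentiable function $F$ over the convex set $D^+u(\xi(s))$, so the first-order optimality condition gives $\l DF(\tau(s),p(s)),(\tau,p)-(\tau(s),p(s))\r\ge0$ for all $(\tau,p)\in D^+u(\xi(s))$, and $DF(\tau(s),p(s))$ is precisely $V$. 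This is a purely local argument at the fixed time $s$: it does not rely on the exposed-face lemma, nor on the right-continuity of the selection along the arc, and so is somewhat simpler and more self-contained than the paper's. The trade-off is that the paper's route is more robust in situations where $F$ might not be differentiable (or where one does not know a priori that $DF$ at the minimizer coincides with the characteristic velocity), but for the present quadratic Hamiltonian your argument is perfectly adequate.
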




\begin{proof}
The first statement of the Proposition is a consequence of \cite[Theorem 3.2, Corollary 3.4]{CY} and of Gronwall's Lemma. It remains to verify (\ref{dirder}). First observe that by (\ref{hgrifj}) for every $s\in[t_0,t_0+T_R)$ we have
\begin{equation}\label{almostfin}
{\d\over\d s^+}u(\xi(s))=\partial u\left(\xi(s),{\d\over\d s^+}\xi(s)\right)=\partial u\left(\xi(s),\left(\begin{array}{cc}1\\\nabla H(p(s))\end{array}\right)\right)\,.
\end{equation}
Let $(\tau(s),p(s))\in D^+u(\xi(s))$ satisfy (\ref{minF}) for every $s\in[t_0,t_0+T_R)$. By {\it(iii)} of Proposition \ref{prop} we obtain
$$\lim_{h\downarrow0}\,\xi(s+h)=\xi(s)\qquad\lim_{h\downarrow0}\,(\tau(s+h),p(s+h))=(\tau(s),p(s))$$
and
$$\lim_{h\downarrow0}\,{\xi(s+h)-\xi(s)\over|\xi(s+h)-\xi(s)|}={(1,\nabla H(p(s)))\over|(1,\nabla H(p(s)))|}\,.$$
Then Proposition \ref{exposed} yields
$$(\tau(s),p(s))\in D^+u\left(\xi(s),{(1,\nabla H(p(s)))\over|(1,\nabla H(p(s)))|}\right)\,.$$
Equivalently,
\begin{equation}\label{endend}
(\tau(s),p(s))\in{\arg\min}_{(\tau,p)\in D^+u(\xi(s))}\left\l\left(\begin{array}{cc}\tau\\p\end{array}\right),\left(\begin{array}{cc}1\\\nabla H(p(s))\end{array}\right)\right\r\,.
\end{equation}
Therefore, (\ref{almostfin}), (\ref{connect}), (\ref{endend}) and the special structure of $H$ yield
$${\d\over\d s^+}u(\xi(s))=\tau(s)+p(s)\cdot\nabla H(p(s))=\tau(s)+Ap(s)\cdot p(s)\,.$$
\end{proof}

In \cite{CY}, it was shown that, given a singular point $X_0=(t_0,x_0)$ of $u$, the singularity propagates locally in time following the generalized characteristic
\begin{equation}\label{chch}
\xi(s)=\left(\begin{array}{cc}t_0\\x_0\end{array}\right)+\int_{t_0}^s\left(\begin{array}{cc}1\\ \nabla H(p(r))\end{array}\right)\d r\,.
\end{equation}
We shall provide an upper bound for the dissipation of the minimal energy (\ref{minF}) along this curve. Eventually, this estimate has the consequence that the singularities cannot actually extinguish in a finite time.

\begin{theorem}\label{th}
Let $X'=(t',x')\in Q$ and let $R>0$ and $0<\bar t<t'$ be associated to $X'$ as in Lemma \ref{lm1}. Then, there exists $T'>0$ such that for every $(t_0,x_0)\in B_{\bar t}(X',{R\over2})$ the Lipschitz continuous arc defined by (\ref{chch}) satisfies
\begin{equation}\label{estmin}
\min_{(\tau,p)\in D^+u(\xi(s))}F(\tau,p)\leq\left(t_0-\bar t\over s-\bar t\right)^2\min_{(\tau_0,p_0)\in D^+u(t_0,x_0)}F(\tau_0,p_0)
\end{equation}
for every $s\in[t_0,t_0+T')$.
\end{theorem}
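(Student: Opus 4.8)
\emph{Plan of proof.} For $X\in Q$ write $m(X)=\min_{(\tau,p)\in D^+u(X)}F(\tau,p)$; recall from the discussion around \eqref{minen} that $X$ is singular iff $m(X)<0$, and that the arc \eqref{chch} is driven by the selection $(\tau(s),p(s))\in D^+u(\xi(s))$ with $F(\tau(s),p(s))=m(\xi(s))$. Since \eqref{estmin} says precisely that $\Phi(s):=(s-\bar t)^2\,m(\xi(s))$ satisfies $\Phi(s)\le\Phi(t_0)$, I would first choose $T'>0$ so small that $\xi([t_0,t_0+T'))\subset B_{\bar t}(X',R)$ (possible because $\xi$ is Lipschitz with a constant controlled by $\nabla H$ and $(t_0,x_0)\in B_{\bar t}(X',R/2)$), and then prove that $\Phi$ is non-increasing on $[t_0,t_0+T')$.

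The first step is to transfer everything to $v_{\bar t}=(t-\bar t)u$ from \eqref{defv}. Since $t-\bar t>0$ on the domain of $v_{\bar t}$ and $v_{\bar t}$ is locally semiconcave there, the map $(\tau,p)\mapsto\big((t-\bar t)\tau+u(X),(t-\bar t)p\big)$ is a bijection of $D^+u(X)$ onto $D^+v_{\bar t}(X)$, and for the image $(\sigma,q)$ a direct computation gives $(t-\bar t)^2F(\tau,p)=(t-\bar t)\sigma+H(q)-v_{\bar t}(X)$. As $v_{\bar t}(X)$ does not depend on the chosen subgradient, minimizing $F$ over $D^+u(X)$ is the same as minimizing $(\sigma,q)\mapsto(t-\bar t)\sigma+H(q)$ over $D^+v_{\bar t}(X)$; let $(\sigma(s),q(s))\in D^+v_{\bar t}(\xi(s))$ be the image of $(\tau(s),p(s))$, so $\Phi(s)=(s-\bar t)\sigma(s)+H(q(s))-v_{\bar t}(\xi(s))$. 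Writing $(\sigma(s),q(s))$ as a convex combination of gradients of the form \eqref{fkbvh} with weights $\lambda_s^k$ and associated minimizers $y_s^k\in\Omega$ of \eqref{t0} at $\xi(s)$, and setting $\bar y_s=\sum_k\lambda_s^k y_s^k$, one gets $q(s)=A^{-1}(\gamma(s)-\bar y_s)$, $\sigma(s)=\sum_k\lambda_s^k u(\bar t,y_s^k)$, and, using \eqref{frekip} together with the quadratic identities in the spirit of \eqref{L-L1}--\eqref{L-L2}, the compact formula $\Phi(s)=-\sum_k\lambda_s^k L(\bar y_s-y_s^k)\le0$: the statement is the monotonicity of this ``energy defect''.

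The heart of the argument is a one–sided increment estimate. Fix $t_0\le s'<s''<t_0+T'$ and put $\delta=\gamma(s'')-\gamma(s')$. Bounding $v_{\bar t}(\xi(s'))$ from above by inserting the minimizers $y_{s''}^k$ into \eqref{t0} at $\xi(s')$, subtracting the exact value $v_{\bar t}(\xi(s''))=\sum_k\lambda_{s''}^k\big[L(\gamma(s'')-y_{s''}^k)+(s''-\bar t)u(\bar t,y_{s''}^k)\big]$, and expanding the differences of $L$ as in \eqref{L-L1}, one obtains (using the identity $H(q)+L(\delta)-\langle q,\delta\rangle=H(q-A^{-1}\delta)$)
\[\Phi(s'')-\Phi(s')\ \le\ (s'-\bar t)\big(\sigma(s'')-\sigma(s')\big)+H\big(q(s'')-A^{-1}\delta\big)-H(q(s')).\]
Applying Lemma~\ref{lm} to $X_1=\xi(s'')$, $X_2=\xi(s')$ with the subgradients $(\sigma(\cdot),q(\cdot))$ gives $(s''-s')\big(\sigma(s'')-\sigma(s')\big)+\langle q(s'')-q(s'),\delta\rangle\le2L(\delta)$; multiplying by $(s'-\bar t)/(s''-s')>0$ and using $q(s'')-A^{-1}\delta=A^{-1}(\gamma(s')-\bar y_{s''})$ and $q(s'')-q(s')=A^{-1}\delta-A^{-1}(\bar y_{s''}-\bar y_{s'})$ (so that $2L(\delta)-\langle q(s'')-q(s'),\delta\rangle=\langle A^{-1}(\bar y_{s''}-\bar y_{s'}),\delta\rangle$), everything collapses to
\[\Phi(s'')-\Phi(s')\ \le\ \frac{s'-\bar t}{s''-s'}\,\big\langle A^{-1}(\bar y_{s''}-\bar y_{s'}),\delta\big\rangle+L(\bar y_{s''}-\bar y_{s'})-\big\langle q(s'),\bar y_{s''}-\bar y_{s'}\big\rangle.\]

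Finally one feeds in the dynamics: by \eqref{hgrifj} and $q(r)=(r-\bar t)p(r)$ we have $\nabla H(q(r))=Aq(r)=\gamma(r)-\bar y_r$, hence $\delta=\int_{s'}^{s''}\frac{\gamma(r)-\bar y_r}{r-\bar t}\,\d r$ and $\langle A^{-1}(\bar y_{s''}-\bar y_{s'}),\delta\rangle=\int_{s'}^{s''}\frac{\langle\bar y_{s''}-\bar y_{s'},q(r)\rangle}{r-\bar t}\,\d r$. Splitting $q(r)=q(s')+(q(r)-q(s'))$ and using $\frac{s'-\bar t}{s''-s'}\int_{s'}^{s''}\frac{\d r}{r-\bar t}\le1$, the two terms of the last display that are linear in $\bar y_{s''}-\bar y_{s'}$ essentially cancel, leaving a remainder controlled by $L(\bar y_{s''}-\bar y_{s'})$ plus quantities small with $s''-s'$ by the right-continuity of $p(\cdot)$; partitioning $[t_0,s]$ and summing then yields $\Phi(s)-\Phi(t_0)\le\sum_i L(\bar y_{s_{i+1}}-\bar y_{s_i})+o(1)$ as the mesh tends to $0$, and the sum tends to $0$ provided $s\mapsto\bar y_s$ — equivalently $p(\cdot)$, through $\bar y_s=\gamma(s)-A(s-\bar t)p(s)$ — is continuous of bounded variation (indeed Lipschitz) along $\xi$, giving $\Phi(s)\le\Phi(t_0)$. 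The main obstacle is exactly this last point: controlling the motion of the barycenter $\bar y_s$ of the energy-minimizing minimizers, i.e.\ proving enough regularity of the generalized characteristic to run the telescoping argument — without it one only gets $D^+\Phi\le0$, which for a merely lower semicontinuous $\Phi$ does not force monotonicity. An alternative route that avoids the regularity issue is to keep only the one–step estimate above with base point $t_0$ and running point $s$, substitute $\delta=\int_{t_0}^{s}\frac{\gamma(r)-\bar y_r}{r-\bar t}\,\d r$, and prove its right-hand side $\le0$ directly by combining it with the minimality of the selection at both $\xi(s)$ and $(t_0,x_0)$ via inequalities of the type \eqref{inneq1}--\eqref{inneq2} for the minimizers $y_s^k$ and $y_{t_0}^j$; the quadratic gain $\big((t_0-\bar t)/(s-\bar t)\big)^2$ then comes out of the weight $r-\bar t$ in the denominator of the integrand.
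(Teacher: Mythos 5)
Your plan identifies the right objects: the rescaled energy $\Phi(s)=(s-\bar t)^2\min_{D^+u(\xi(s))}F$, the transform $v_{\bar t}$ from \eqref{defv}, the identity $D^+v_{\bar t}=(u,0)^T+(t-\bar t)D^+u$, and the monotonicity estimate of Lemma~\ref{lm}. The algebra leading to $\Phi(s)=-\sum_k\lambda_s^k L(\bar y_s-y_s^k)\le0$ is correct and is a nice way to see the sign of $\Phi$. Up to and including the two-point estimate for $\Phi(s'')-\Phi(s')$, you are following essentially the same route as the paper.

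The gap you flag at the end is real and is exactly where your argument breaks down. The selection $(\tau(\cdot),p(\cdot))$ furnished by Proposition~\ref{prop} is only \emph{right-continuous}, not Lipschitz or of bounded variation, and $m(X)=\min_{D^+u(X)}F$ is merely lower semicontinuous; so the telescoping over a partition does not close, and a pointwise bound $D^+\Phi\le0$ indeed does not force $\Phi$ to be non-increasing for a merely l.s.c.\ function. Your "alternative route" is too vague to assess: the two-point estimate with base point $t_0$ still contains the term $L(\bar y_{s}-\bar y_{t_0})-\langle q(t_0),\bar y_{s}-\bar y_{t_0}\rangle$, which need not be $\le0$, and the quadratic weight $(t_0-\bar t)^2/(s-\bar t)^2$ does not emerge from \eqref{inneq1}--\eqref{inneq2} alone.

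The paper avoids the regularity issue by a deliberate ordering of limits. Rather than trying to differentiate $\Phi$ directly, it fixes $h>0$ and works with
\[G_h(r)=(r+h-\bar t)^2\Bigl(\rho_h(r)+\tfrac12A^{-1}\gamma_h(r)\cdot\gamma_h(r)\Bigr),\qquad \rho(r)=\int_{t_0}^r\tau,\]
which, for fixed $h$, is a \emph{Lipschitz} function of $r$ because the difference quotients $\rho_h,\gamma_h$ are. Applying Lemma~\ref{lm} to $\xi(r)$ and $\xi(r+h)$, dividing by $h^2$ and using Proposition~\ref{prop} (in particular \eqref{dirder}) reorganizes the inequality into
\[\tfrac{\d}{\d r^+}G_h(r)\ \le\ C\,(r+h-\bar t)\,\omega_h(r),\qquad \omega_h(r)=\sup_{r\le z\le r+h}\bigl(|p(z)-p(r)|+|\tau(z)-\tau(r)|\bigr).\]
Since $G_h$ is Lipschitz in $r$, this right-derivative bound can be integrated legitimately over $[t_0,s]$, giving $G_h(s)\le G_h(t_0)+C\int_{t_0}^s(r+h-\bar t)\omega_h(r)\,\d r$. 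Only then is $h\to0$ taken: right-continuity of $(\tau,p)$ gives $\omega_h(r)\to0$ pointwise, dominated convergence kills the remainder, and $G_h(t_0)\to(t_0-\bar t)^2F(\tau(t_0),p(t_0))$, $G_h(s)\to(s-\bar t)^2F(\tau(s),p(s))$. Thus right-continuity of the selection is exactly enough; no BV or Lipschitz regularity of $p(\cdot)$ (equivalently of the barycenter $\bar y_s$) is ever needed. That is the missing ingredient in your approach.
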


Theorem \ref{th} implies the global propagation of the singularities.

\begin{theorem}\label{th:persistence}
Let $(t_0,x_0)$ be a singular point of $u$. Then there exist $T\in(0,+\infty]$ and a Lipschitz continuous arc $\gamma:[t_0,t_0+T)\to\R^n$ starting from $x_0$, satisfying
$$(s,\gamma(s))\in\Sigma(u)\qquad\forall\,s\in[t_0,t_0+T)$$
and such that $\lim_{s\to t_0+T}\gamma(s)\in\partial \Omega$ whenever $T<+\infty$.
\end{theorem}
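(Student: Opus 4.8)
The plan is a continuation argument for the generalized characteristic issued from $(t_0,x_0)$, in which the energy estimate of Theorem~\ref{th} is used to keep the singularity alive along the entire maximal interval of existence of the characteristic.

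First I would set up the object to be continued. Since $H$ is quadratic, generalized characteristics are unique for given initial data (Gronwall, as recalled before Proposition~\ref{prop}); combining this with the local existence and continuation theory of \cite{CY} (see Proposition~\ref{prop}) yields a \emph{maximal} generalized characteristic $\xi(s)=(s,\gamma(s))$ with $\xi(t_0)=(t_0,x_0)$, defined on a maximal interval $[t_0,t_0+T)$, $T\in(0,+\infty]$, taking values in $Q$; by uniqueness, $\xi$ agrees on each subinterval with the arc \eqref{chch} issued from its left endpoint. From $\frac{\d}{\d s^+}\xi(s)=(1,\nabla H(p(s)))=(1,Ap(s))$ with $(\tau(s),p(s))\in D^+u(\xi(s))$, together with the bound $|p(s)|\le{\rm Lip}(u)$ on the compact set $[0,t_0+T]\times\overline\Omega$ (valid since $D^+u$ is the convex hull of the reachable gradients), the curve $\gamma$ is Lipschitz with a constant independent of $s$. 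Hence, if $T<+\infty$, $\gamma(s)$ converges to some $\bar x\in\overline\Omega$ as $s\to t_0+T$; were $\bar x\in\Omega$, then $(t_0+T,\bar x)\in Q$ and a new generalized characteristic started there could be concatenated with $\xi$, contradicting maximality of $T$. Thus $\bar x\in\partial\Omega$, which is the boundary alternative, and it remains to prove persistence of the singularity, i.e. $(s,\gamma(s))\in\Sigma(u)$ for all $s\in[t_0,t_0+T)$.

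For persistence, put $m(s):=\min_{(\tau,p)\in D^+u(\xi(s))}F(\tau,p)$. Recall (from the discussion following \eqref{minen}) that $\xi(s)\in\Sigma(u)$ iff $m(s)<0$, and that $m\le 0$ always, since $F$ vanishes on $D^*u(\xi(s))\subseteq D^+u(\xi(s))$. By hypothesis $m(t_0)<0$, and applying Theorem~\ref{th} with $X'=(t_0,x_0)$ gives $m(s)<0$ on some $[t_0,t_0+T_0')$. Suppose the set $N:=\{s\in[t_0,t_0+T):m(s)=0\}$ were nonempty, and set $s_1:=\inf N$; then $s_1\ge t_0+T_0'>t_0$, so $\xi(s_1)\in Q$, and $m<0$ on $[t_0,s_1)$. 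Apply Theorem~\ref{th} now with $X'=\xi(s_1)$, obtaining $0<\bar t_1<s_1$, $R_1>0$, $T_1'>0$; then choose $t_0'\in(\max\{\bar t_1,\,s_1-T_1'\},\,s_1)$ close enough to $s_1$ that $\xi(t_0')\in B_{\bar t_1}(\xi(s_1),R_1/2)$, which is possible by continuity of $\xi$. Since $\xi$ restricted to $[t_0',\cdot)$ is the arc \eqref{chch} issued from $\xi(t_0')$, Theorem~\ref{th} yields
$$m(s)\le\left(\frac{t_0'-\bar t_1}{s-\bar t_1}\right)^2 m(t_0')\qquad\text{for }s\in[t_0',\min\{t_0'+T_1',\,t_0+T\}).$$
Because $t_0'<s_1=\inf N$ we have $m(t_0')<0$, so $m<0$ on all of $[t_0',\min\{t_0'+T_1',\,t_0+T\})$; as $t_0'>s_1-T_1'$ and $s_1<t_0+T$, this interval is a neighbourhood of $s_1$, contradicting $s_1=\inf N$. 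Hence $N=\emptyset$, so $m<0$ throughout $[t_0,t_0+T)$, and Theorem~\ref{th:persistence} follows.

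The delicate point is not the energy estimate—Theorem~\ref{th} is the engine—but the surrounding bookkeeping: that a unique maximal generalized characteristic exists; that its restrictions to subintervals are exactly the arcs \eqref{chch}, so the dissipation inequality can be re-applied from an interior time $t_0'$ with a \emph{fresh} base time $\bar t_1$ attached to $\xi(s_1)$ rather than to $x_0$; and that the uniform Lipschitz bound on $\gamma$ legitimately permits passing to the limit at $t_0+T$, to either continue $\xi$ or conclude that $\gamma$ reaches $\partial\Omega$. All of this reduces to combining Gronwall uniqueness—available precisely because $H$ is a quadratic form—with the continuation results of \cite{CY} quoted around Proposition~\ref{prop}.
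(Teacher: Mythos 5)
Your proof is correct and uses the same engine as the paper's proof---iterated application of Theorem~\ref{th} from a fresh reference time, combined with a supremum/infimum contradiction---so it is essentially the same approach. The only difference is in bookkeeping: you take $T$ to be the maximal existence time of the generalized characteristic, handle the boundary alternative by a continuation argument, and prove persistence via $s_1:=\inf\{s:m(s)=0\}$, whereas the paper defines $T$ directly as the supremum of singular times and shows that an interior limit point forces the singularity to extend past $T$; your version is a bit more explicit about separating where $\xi$ exists from where it is singular, but both amount to the same iterative use of estimate \eqref{estmin}.
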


\begin{proof}
Let $\xi(\cdot)=:(\cdot,\gamma(\cdot))$ be the generalized characteristic starting from $(t_0,x_0)$. Set
\begin{equation}\label{defTmax}
T=\sup\{r\geq0\,:\,\xi(t_0+r)\in\Sigma(u)\}.
\end{equation}
By Theorem \ref{th} we have that $T>0$. Then either $T=+\infty$ or $0<T<+\infty$. In the latter case we must have $x':=\lim_{s\to t_0+T}\gamma(s)\in\partial\Omega$. Indeed, suppose by contradiction that $x'\in\Omega$. Let $R>0$ and $0<\bar t<t_0+T$ be associated with the point $(t',x'):=(t_0+T,x')\in Q$  as in Lemma \ref{lm1}. Let $T'>0$ be as provided by Theorem \ref{th} and let $\max\{\bar t,t'-T'\}<s<t'$ be such that $\xi(s)\in B(\xi(t'),{R\over2})$. Hence, by Theorem \ref{th} we have
$$\min_{(\tau,p)\in D^+u(\xi(t'))}F(\tau,p)\leq\left({s-\bar t\over t'-\bar t}\right)^2\min_{(\tau,p)\in D^+u\left(\xi\left(s\right)\right)}F(\tau,p)<0.$$
Then, $\xi(t_0+T)\in\Sigma(u)\cap Q$ and Theorem \ref{th} contradicts the maximality in (\ref{defTmax}).
\end{proof}

Before proving Theorem \ref{th}, we provide an example showing that the estimate (\ref{estmin}) is somehow sharp.

\noindent
{\bf Example.} For $\varepsilon>0$ consider the problem
\begin{equation}\label{syst22}
\left\{\begin{array}{ll}
    u_t(t,x)+{1\over2}u_x^2(t,x)=0 & \mbox{ a.e. }(t,x)\in (0,+\infty)\times\R\\
    u(0,x)={(|x|-1)^2\over2\varepsilon} ,
\end{array}\right.
\end{equation}
The Hopf formula provides the unique viscosity solution of (\ref{syst22}):
$$u_\varepsilon(t,x)={1\over2}{(|x|-1)^2\over t+\varepsilon}\,.$$
The singular set of this map is $\Sigma(u_\varepsilon)=(0,+\infty)\times\{0\}$ and the curve $\xi:(0,+\infty)\to\R^2$ defined by $\xi(s)=(s,0)$ is a generalized characteristic. If we compute the energy minimizing selection of the superdifferential of $u_\varepsilon$ along this curve, we obtain
$${\arg\min}\{F(\tau,p)\,:\,(\tau,p)\in D^+u_\varepsilon(s,0)\}=\left\{\left(-{1\over2(s+\varepsilon)^2},0\right)\right\}=:\{(\tau(s),p(s))\}\,.$$
Then, given $0<t_0\leq s$, we have
$$F(\tau(s),p(s))=\left({t_0+\varepsilon\over s+\varepsilon}\right)^2F(\tau(t_0),p(t_0))\,.$$
Letting $\varepsilon\downarrow0$ and considering the associated functions $u_\varepsilon$, the inequality (\ref{estmin}) turns out to be sharp.

\begin{proof}[Proof of Theorem \ref{th}]
Fix $X'=(t',x')\in Q$. Let $R>0$ and $0<\bar t<t'$ be associated to $X'$ as in Lemma \ref{lm1}, and $T_R$ be associated to $X'$ and $R$ as in Proposition \ref{prop}. If $(t_0,x_0)\in B_{\bar t}\left(X',{R\over2}\right)$, let $\xi:[t_0,t_0+T_R)\to Q$ be the generalized characteristic starting from $(t_0,x_0)$ and, for any $s\in[t_0,t_0+T_R]$, the vector $(\tau(s),p(s))\in D^+u(\xi(s))$ satisfy
$$F(\tau(s),p(s))\leq F(\tau,p)\qquad\forall\,(\tau,p)\in D^+u(\xi(s)).$$
Observe that, by (\ref{hgrifj}), for every $(t_0,x_0)\in B_{\bar t}\left(X',{R\over2}\right)$ the curve $\xi$ is Lipschitz continuous with constant $l_\xi\leq1+\Lambda l_u$, where $\Lambda=\max\{A^{-1}z\cdot z\,:\,z\in\partial B(0,1)\}$ and $l_u$ is a Lipschitz constant of $u$ on the set $\left[0,t'+{R\over2}+T_R\right]\times\overline{\Omega}$. Setting $T'={R\over 2l_\xi}$, for every $(t_0,x_0)\in B_{\bar t}\left(X',{R\over2}\right)$ we have that
$$\xi(s)\in B_{\bar t}(X',R)\qquad\forall\,s\in[t_0,t_0+T')\,.$$
In order to obtain the claim of the Theorem, we need to verify that
$$F(\tau(s),p(s))\leq\left(t_0-\bar t\over s-\bar t\right)^2 F(\tau(t_0),p(t_0))\qquad\forall\,s\in[t_0,t_0+T')\,.$$

Fix $(t_0,x_0)\in B_{\bar t}\left(X',{R\over2}\right)$.
Recalling \eqref{hgrifj}, for all $s\in[t_0,t_0+T']$ we can define the arc $\gamma$ by
$$\xi(s)=\left(\begin{array}{cc}t_0\\x_0\end{array}\right)+\int_{t_0}^s\left(\begin{array}{cc}1\\ \nabla H(p(r))\end{array}\right)\d r=:\left(\begin{array}{cc}s\\ \gamma(s)\end{array}\right).$$
Let $v_{\bar t}$ be defined as in (\ref{defv}) and fix $s\in(t_0,t_0+T')$. Since $\xi([t_0,t_0+T'])\subset B_{\bar t}(X',R)$, by Lemma \ref{lm}, for every $r\in[t_0,s]$ and $0<h<t_0+T'-s$, we have
\begin{equation}\label{ineq}
\left\l P_h-P,\xi(r+h)-\xi(r)\right\r\leq2L(\gamma(r+h)-\gamma(r))
\end{equation}
for any $P_h\in D^+v_{\bar t}(\xi(r+h)),P\in D^+v_{\bar t}(\xi(r))$.
Let $(t,x)\in B_{\bar t}(X',R)\setminus\Sigma(u)$. By Lemma \ref{lm}, there is $y\in\Omega$ such that
$$u(t,x)=(t-\bar t)L\left({x-y\over t-\bar t}\right)+u(\bar t,y)\,.$$
Then one can prove that
$$Du(t,x)=\left(\begin{array}{cc}-L\left({x-y\over t-\bar t}\right)\\{1\over t-\bar t}\nabla L(x-y)\end{array}\right)\qquad\text{and}\qquad Dv_{\bar t}(t,x)=\left(\begin{array}{cc}u(\bar t,y)\\ \nabla L(x-y)\end{array}\right)\,,$$
so that
$$D v_{\bar t}(t,x)=\left(\begin{array}{cc}u(t,x)\\0\end{array}\right)+(t-\bar t)Du(t,x)\,.$$
This and the properties of the superdifferential of semiconcave functions yield that for every $(t,x)\in B_{\bar t}(X',R)$
\begin{equation}\label{verd}
D^+ v_{\bar t}(t,x)=\left(\begin{array}{cc}u(t,x)\\0\end{array}\right)+(t-\bar t)D^+u(t,x)\,.
\end{equation}
By (\ref{ineq}) and (\ref{verd}), we obtain in particular
\begin{equation*}
\begin{split}
\left\l\left(\begin{array}{cc}u(\xi(r+h))\\0\end{array}\right)\right.&\left.+(r+h-\bar t)\left(\begin{array}{cc}\tau(r+h)\\ p(r+h)\end{array}\right)-\left(\begin{array}{cc}u(\xi(r))\\0\end{array}\right)\right.\\
&\left.+(r-\bar t)\left(\begin{array}{cc}\tau(r)\\p(r)\end{array}\right),\left(\begin{array}{cc}h\\\gamma(r+h)-\gamma(r)\end{array}\right)\right\r\leq2L\left(\gamma(r+h)-\gamma(r)\right),
\end{split}
\end{equation*}
that is
\begin{equation*}
\begin{split}
&h\left[u(\xi(r+h))-u(\xi(r))\right]+h(r+h-\bar t)\left[\tau(r+h)-\tau(r)\right]+h^2\tau(r)\\
+&(r+h-\bar t)\left\l p(r+h)-p(r),\gamma(r+h)-\gamma(r)\right\r+h\left\l p(r),\gamma(r+h)-\gamma(r)\right\r\\
\leq&2L\left(\gamma(r+h)-\gamma(r)\right).
\end{split}
\end{equation*}
Dividing by $h^2$ and setting
$$\tau_h(r)={\tau(r+h)-\tau(r)\over h}\,,\quad p_h(r)={p(r+h)-p(r)\over h}\,,\quad \gamma_h(r)={\gamma(r+h)-\gamma(r)\over h}\,$$
we obtain
\begin{equation}\label{bigineq1}
\begin{split}
{u(\xi(r+h))-u(\xi(r))\over h}+&(r+h-\bar t)\tau_h(r)+\tau(r)\\
+&(r+h-\bar t)\left\l p_h(r),\gamma_h(r)\right\r+\left\l p(r),\gamma_h(r)\right\r
\leq2L\left(\gamma_h(r)\right).
\end{split}
\end{equation}
Let $\rho(r)=\int_{t_0}^r\tau(z)\d z$, and $\rho_h(r)={\rho(r+h)-\rho(r)\over h}$. By Proposition \ref{prop} we have
\begin{eqnarray}\label{gyuh}\nonumber
{u(\xi(r+h))-u(\xi(r))\over h}&=&{1\over h}\int_r^{r+h}{\d\over\d z^+}u(\xi(z))\d z\\
&=&\rho_h(r)+{1\over h}\int_r^{r+h}A\,p(z)\cdot p(z)\d z
\end{eqnarray}
and
\begin{equation}\label{fjdio}
\left\l p_h(r),\gamma_h(r)\right\r={\d\over\d r^+}\left({1\over2}A^{-1}\gamma_h(r)\cdot\gamma_h(r)\right).
\end{equation}
By (\ref{bigineq1}), (\ref{gyuh}) and (\ref{fjdio}) we obtain
\begin{eqnarray}\label{biggineq}\nonumber
2\rho_h(r)+&A^{-1}\gamma_h(r)\cdot\gamma_h(r)+(r+h-\bar t){\d\over\d r^+}\left(\rho_h(r)+{1\over2}A^{-1}\gamma_h(r)\cdot\gamma_h(r)\right)\\\nonumber
&\leq A^{-1}\gamma_h(r)\cdot\gamma_h(r)-{1\over h}\int_r^{r+h}A\,p(z)\cdot p(z)\d z\\\nonumber
&+A^{-1}\gamma_h(r)\cdot\gamma_h(r)-\left\l p(r),\gamma_h(r)\right\r\\
&+\rho_h(r)-{\d\over\d r^+}\rho(r)\,.
\end{eqnarray}
Setting
$$G_h(r)=(r+h-\bar t)^2\left(\rho_h(r)+{1\over2}A^{-1}\gamma_h(r)\cdot\gamma_h(r)\right)\,,$$
the left hand side of (\ref{biggineq}) can be rewritten as $(r+h-\bar t)^{-1}{\d\over\d r^+}G_h(r)$. Now consider the right hand side of the inequality. Define
$$\omega_h(r)=\sup_{r\leq z\leq r+h}[|p(z)-p(r)|+|\tau(z)-\tau(r)|]\,.$$
The sequence $\omega_h(\cdot)$ converges pointwise to 0 as $h\to0+$, since $\tau$ and $p$ are right-continuous. We have
\begin{equation}\label{rrr1}
A^{-1}\gamma_h(r)\cdot\gamma_h(r)-{1\over h}\int_r^{r+h}A\,p(z)\cdot p(z)\d z\leq0\,,
\end{equation}
\begin{equation}\label{rrr2}
\left|A^{-1}\gamma_h(r)\cdot\gamma_h(r)-\left\l p(r),\gamma_h(r)\right\r\right|\leq|\gamma_h(r)|\cdot{1\over h}\int_r^{r+h}|p(z)-p(r)|\d z\leq \|Al_u\omega_h(r)
\end{equation}
and
\begin{equation}\label{rrr3}
\left|\rho_h(r)-{\d\over\d r^+}\rho(r)\right|\leq{1\over h}\int_r^{r+h}|\tau(z)-\tau(r)|\d z\leq\omega_h(r)\,.
\end{equation}
Here $l_u>0$ is a suitable Lipschitz constant for $u$.
Summarizing, (\ref{biggineq}), (\ref{rrr1}), (\ref{rrr2}) and (\ref{rrr3}) yield
\begin{equation}\label{biggineq2}
{\d\over\d r^+}G_h(r)\leq C(r+h-\bar t)\,\omega_h(r)
\end{equation}
for some $C>0$ independent on $r$ and $h$.
Integrating both sides of $(\ref{biggineq2})$ on the interval $[t_0,s]$, we obtain 
$$G_h(s)\leq G_h(t_0)+C\int_{t_0}^s(r+h-\bar t)\,\omega_h(r)\,.$$
Taking the limit as $h\to0$ and using dominated convergence, we obtain
$$F(\tau(s),p(s))\leq\left(t_0-\bar t\over s-\bar t\right)^2\,F(\tau(t_0),p(t_0))\,.$$
This concludes the proof of the Theorem.

\end{proof}

\section*{Acknowledgements} This work was co-funded by
the European Union under the 7th Framework Programme
``FP7-PEOPLE-2010-ITN'', grant agreement number
264735-SADCO.


\medskip
Received xxxx 20xx; revised xxxx 20xx.
\medskip

\end{document}